\documentclass[11pt,leqno]{article}
\usepackage[margin=1in]{geometry} 
\usepackage{amssymb,amsfonts,amsmath,bbm,mathrsfs,stmaryrd,mathtools}
\usepackage{xcolor}
\usepackage{url}

\usepackage{extarrows}

\usepackage[shortlabels]{enumitem}
\usepackage{tensor}

\usepackage{xr}
\usepackage[T1]{fontenc}
\usepackage[utf8]{inputenc}

\usepackage[colorlinks,
linkcolor=black!75!red,
citecolor=blue,
pdftitle={},
pdfproducer={pdfLaTeX},
pdfpagemode=None,
bookmarksopen=true,
bookmarksnumbered=true,
backref=page]{hyperref}

\usepackage{tikz}
\usetikzlibrary{arrows,calc,decorations.pathreplacing,decorations.markings,decorations.shapes,intersections,shapes.geometric,through,fit,shapes.symbols,positioning,decorations.pathmorphing}

\makeatletter
\newlength\zig@L
\newlength\zig@La
\newlength\zig@Lb

\newcommand{\xzigrightarrow}[2][]{%
  \mathrel{%
    \settowidth{\zig@La}{$\scriptstyle #2$}%
    \settowidth{\zig@Lb}{$\scriptstyle #1$}%
    \zig@L=\zig@La\relax
    \ifdim\zig@Lb>\zig@L \zig@L=\zig@Lb\fi
    \advance\zig@L by 2.2em\relax
    \tikz[baseline=-0.65ex]{%
      \draw[->,
      line cap=round,
      decorate,
      decoration={zigzag,segment length=4pt,amplitude=1.1pt}]%
      (0,0) -- (\zig@L,0)
      node[midway,above=2pt] {$\scriptstyle #2$}%
      \if\relax\detokenize{#1}\relax\else
        node[midway,below=2pt] {$\scriptstyle #1$}%
      \fi
      ;
    }%
  }%
}
\makeatother

\makeatletter
\newcommand{\squigjoin}{1mu} 

\def\sqleft@{\sim}                    
\def\sqmid@{\sim\mkern-\squigjoin}    

\def\rightsquigarrowfill@{%
  \arrowfill@{\sqleft@}{\sqmid@}{\mkern-4mu\succ}%
}

\newcommand{\xrightsquigarrow}[2][]{%
  \ext@arrow 0359\rightsquigarrowfill@{#1}{#2}%
}
\makeatother

\makeatletter
\newcommand*\circled[1]{\tikz[baseline=(char.base)]{
    \node[shape=circle, draw, inner sep=0pt, 
    minimum height={\f@size},] (char) {\vphantom{WAH1g}#1};}}
\makeatother

\makeatletter
\DeclareRobustCommand\widecheck[1]{{\mathpalette\@widecheck{#1}}}
\def\@widecheck#1#2{%
  \setbox\z@\hbox{\m@th$#1#2$}%
  \setbox\tw@\hbox{\m@th$#1%
    \widehat{%
      \vrule\@width\z@\@height\ht\z@
      \vrule\@height\z@\@width\wd\z@}$}%
  \dp\tw@-\ht\z@
  \@tempdima\ht\z@ \advance\@tempdima2\ht\tw@ \divide\@tempdima\thr@@
  \setbox\tw@\hbox{%
    \raise\@tempdima\hbox{\scalebox{1}[-1]{\lower\@tempdima\box
        \tw@}}}%
  {\ooalign{\box\tw@ \cr \box\z@}}}
\makeatother

\usepackage{braket}

\usepackage[amsmath,thmmarks,hyperref]{ntheorem}
\usepackage{cleveref}

\newcommand\nthalias[1]{\AddToHook{env/#1/begin}{\crefalias{lemma}{#1}}}

\nthalias{definition}
\nthalias{example}
\nthalias{examples}
\nthalias{remark}
\nthalias{remarks}
\nthalias{convention}
\nthalias{notation}
\nthalias{construction}
\nthalias{sketch}
\nthalias{theoremN}
\nthalias{propositionN}
\nthalias{corollaryN}
\nthalias{lemma}
\nthalias{proposition}
\nthalias{corollary}
\nthalias{theorem}
\nthalias{conjecture}
\nthalias{question}
\nthalias{assumption}

\creflabelformat{enumi}{#2#1#3}

\crefname{section}{Section}{Sections}
\crefformat{section}{#2Section~#1#3} 
\Crefformat{section}{#2Section~#1#3} 

\crefname{subsection}{\S}{\S\S}
\AtBeginDocument{%
  \crefformat{subsection}{#2\S#1#3}%
  \Crefformat{subsection}{#2\S#1#3}%
}

\crefname{subsubsection}{\S}{\S\S}
\AtBeginDocument{%
  \crefformat{subsubsection}{#2\S#1#3}%
  \Crefformat{subsubsection}{#2\S#1#3}%
}

\theoremstyle{plain}

\newtheorem{lemma}{Lemma}[section]
\newtheorem{proposition}[lemma]{Proposition}
\newtheorem{corollary}[lemma]{Corollary}
\newtheorem{theorem}[lemma]{Theorem}

\theoremstyle{plain}
\theoremnumbering{Alph}

\theoremstyle{plain}
\theorembodyfont{\upshape}
\theoremsymbol{\ensuremath{\blacklozenge}}

\newtheorem{definition}[lemma]{Definition}

\newtheorem{remark}[lemma]{Remark}

\crefname{definition}{definition}{definitions}
\crefformat{definition}{#2definition~#1#3} 
\Crefformat{definition}{#2Definition~#1#3} 

\crefname{ex}{example}{examples}
\crefformat{example}{#2example~#1#3} 
\Crefformat{example}{#2Example~#1#3} 

\crefname{exs}{example}{examples}
\crefformat{examples}{#2example~#1#3} 
\Crefformat{examples}{#2Example~#1#3} 

\crefname{remark}{remark}{remarks}
\crefformat{remark}{#2remark~#1#3} 
\Crefformat{remark}{#2Remark~#1#3} 

\crefname{remarks}{remark}{remarks}
\crefformat{remarks}{#2remark~#1#3} 
\Crefformat{remarks}{#2Remark~#1#3} 

\crefname{convention}{convention}{conventions}
\crefformat{convention}{#2convention~#1#3} 
\Crefformat{convention}{#2Convention~#1#3} 

\crefname{notation}{notation}{notations}
\crefformat{notation}{#2notation~#1#3} 
\Crefformat{notation}{#2Notation~#1#3} 

\crefname{table}{table}{tables}
\crefformat{table}{#2table~#1#3} 
\Crefformat{table}{#2Table~#1#3}

\crefname{lemma}{lemma}{lemmas}
\crefformat{lemma}{#2lemma~#1#3} 
\Crefformat{lemma}{#2Lemma~#1#3} 

\crefname{proposition}{proposition}{propositions}
\crefformat{proposition}{#2proposition~#1#3} 
\Crefformat{proposition}{#2Proposition~#1#3} 

\crefname{propositionN}{proposition}{propositions}
\crefformat{propositionN}{#2proposition~#1#3} 
\Crefformat{propositionN}{#2Proposition~#1#3} 

\crefname{corollary}{corollary}{corollaries}
\crefformat{corollary}{#2corollary~#1#3} 
\Crefformat{corollary}{#2Corollary~#1#3} 

\crefname{corollaryN}{corollary}{corollaries}
\crefformat{corollaryN}{#2corollary~#1#3} 
\Crefformat{corollaryN}{#2Corollary~#1#3} 

\crefname{theorem}{theorem}{theorems}
\crefformat{theorem}{#2theorem~#1#3} 
\Crefformat{theorem}{#2Theorem~#1#3} 

\crefname{theoremN}{theorem}{theorems}
\crefformat{theoremN}{#2theorem~#1#3} 
\Crefformat{theoremN}{#2Theorem~#1#3} 

\crefname{enumi}{}{}
\crefformat{enumi}{#2#1#3}
\Crefformat{enumi}{#2#1#3}

\crefname{assumption}{assumption}{Assumptions}
\crefformat{assumption}{#2assumption~#1#3} 
\Crefformat{assumption}{#2Assumption~#1#3} 

\crefname{construction}{construction}{Constructions}
\crefformat{construction}{#2construction~#1#3} 
\Crefformat{construction}{#2Construction~#1#3} 

\crefname{sketch}{sketch}{Sketches}
\crefformat{sketch}{#2sketch~#1#3} 
\Crefformat{sketch}{#2Sketch~#1#3} 

\crefname{question}{question}{Questions}
\crefformat{question}{#2question~#1#3} 
\Crefformat{question}{#2Question~#1#3} 

\crefname{equation}{}{}
\crefformat{equation}{(#2#1#3)} 
\Crefformat{equation}{(#2#1#3)}

\numberwithin{equation}{section}

\theoremstyle{nonumberplain}
\theoremsymbol{\ensuremath{\blacksquare}}

\newtheorem{proof}{Proof}
\newcommand\pf[1]{\newtheorem{#1}{Proof of \Cref{#1}}}

\newcommand\bG{{\mathbb G}}

\newcommand\bP{{\mathbb P}}

\newcommand\bR{{\mathbb R}}

\newcommand\bV{{\mathbb V}}

\newcommand\bZ{{\mathbb Z}}

\newcommand\wt{\widetilde}

\newcommand{\qedhere}{\mbox{}\hfill\ensuremath{\blacksquare}}

\title{Rigid ternary relations in finite-dimensional Hilbert-space Grassmannians}
\author{Alexandru Chirvasitu}

\begin{document}

\date{}

\newcommand{\Addresses}{{
  \bigskip
  \footnotesize

  \textsc{Department of Mathematics, University at Buffalo}
  \par\nopagebreak
  \textsc{Buffalo, NY 14260-2900, USA}  
  \par\nopagebreak
  \textit{E-mail address}: \texttt{achirvas@buffalo.edu}

}}

\maketitle

\begin{abstract}
  For positive integers $1\le r<d<n$ consider subsets $S\subseteq \mathbb{G}(r,V)$ of the $r$-plane Grassmannian of an $n$-dimensional Hilbert space $V$ saturated in the sense that the $r$-plane $\eta''$ belongs to $S$ whenever it is the orthogonal projection of $\eta'\in S$ onto a $d$-plane through $\eta\in S$. Motivated by such closure operators' natural occurrence in projective-geometry and linear preserver problems, we classify said saturated sets as precisely the disjoint unions of Grassmannian spines, with cores standing in a relation of mutual separation that can be made precise (a spine being the set of $r$-planes containing a fixed core $k$-plane $\pi$ for some $0\le k\le r$). This generalizes the author's results describing saturated $r$-plane sets in the tame dimensional regime $2r\le d$, where the disjoint unions in question by necessity collapse to single spines. 
\end{abstract}

\noindent \emph{Key words:
  Grassmannian;
  Hilbert space;
  closure operator;
  distal;
  orthogonal projection;
  saturated;
  spine;
  ternary relation
}

\vspace{.5cm}

\noindent{MSC 2020: 51A05; 46C05; 51A20; 14M15; 14P25; 15A03; 51M15; 51M35
}


\section*{Introduction}

Given positive integers $1\le r<d<n$, \cite[Introduction]{MR5050586} defines the ternary relation $\tau_d=\tau_d(\bullet,\bullet\mid\bullet)$ on the $r$-plane Grassmannian $\bG(r,V)$ of a (real or complex) $n$-dimensional Hilbert space $V$ by
\begin{equation*}
  \tau(\eta,\eta'\mid \eta'')
  \iff
  \exists\left(\zeta\in \bG(d,V)\right)
  \bigg(\eta\le \zeta \wedge \eta''=\text{orthogonal projection }P_{\zeta}\eta'\bigg).
\end{equation*}

That paper's main result concerning that setup, \cite[Theorem 1.8]{MR5050586}, is a rigidity statement valid provided $2r\le d$ (and \emph{only} then): the subsets of $\bG(r,V)$ \emph{$\tau_d$-saturated} in the sense that along with $\eta,\eta'$ they contain all $\eta''$, $\tau_d(\eta,\eta'\mid \eta'')$ are precisely the \emph{Grassmannian $r$-spines}
\begin{equation*}
  \pi^{\uparrow}
  =
  \pi^{\uparrow}_{\bG(r,V)}
  :=
  \left\{\wt{\pi}\in \bG(r,V)\ :\ \wt{\pi}\ge \pi\right\}
\end{equation*}
for $(\le r)$-dimensional $\pi$. 

What originally motivated considering $\tau_d$ was the emergence of $\tau_2$-saturation as instrumental in results on and around Grassmannian geometry in Hilbert spaces: one approach to \cite[Theorem 0.1]{2601.11455v2}, a variant of the celebrated \emph{fundamental theorem of projective geometry} \cite[Theorem 2.3]{pank_wign}, employs precisely that saturation property. The former classifies maps $\bG(V)\to \bG(W)$ between finite-dimensional Hilbert-space Grassmannians respecting lattice operations on \emph{commeasurable} pairs of vector spaces (a term familiar from quantum-mechanics-oriented literature \cite[p.140]{MR2149209}: the orthogonal projections of the spaces in question commute).

In turn, projective-geometry classification results of this nature readily plug in as auxiliary in linear-algebra \emph{preserver problems} \cite{MR2736150,zbMATH01100760,zbMATH07673413,MR4830482}: \cite[Theorem 1.1]{zbMATH05302134} and \cite[Theorem 0.2]{2501.06840v2}, for instance, classifying continuous, spectrum- and commutativity-preserving maps between various types of matrix spaces, both employ the fundamental theorem (in qualitatively different ways). The present considerations are thus somewhat deeper-rooted than a first perusal might suggest. 

\cite[Lemma 1.9]{MR5050586} shows that spines cannot account for all $\tau_d$-saturation in the size regime $2r>d$: two-element $\tau_d$-saturated sets can always be produced in that case. The aim of the present note is to show that that loss of structure brought about by the $2r>d$ ``phase transition'' is only apparent: $\tau_d$-saturated sets can still be described uniformly for all choices of numerical parameters in terms of spines, with the classification specializing to its simpler $2r\le d$ version for reasons that will become transparent once the statement is in place. An auxiliary notion:

\begin{definition}\label{def:distal}
  Let $0\le k,k'<d<n\in \bZ_{\ge 2}$ and $V$ a real or complex $n$-dimensional Hilbert space. Two subspaces $\pi\in \bG(k,V)$ and $\pi'\in \bG(k',V)$ are \emph{(mutually) $d$-distal} (notation: $\pi\oslash_d \pi'$) if $\dim \pi\cap \pi'^{\perp}>d-k'$.
\end{definition}

\begin{theorem}\label{th:spine.union}
  Let $1\le r<d<n\in \bZ_{\ge 3}$ and $V$ an $n$-dimensional real or complex Hilbert space. The $\tau_d$-saturated subsets of $\bG(r,V)$ are precisely the disjoint spine unions
  \begin{equation*}
    \bigsqcup_{i\in I}\left(\pi_i\right)^{\uparrow}_{\bG(r,V)}   
    ,\quad
    \begin{gathered}
      \left\{\pi_i\right\}_{i\in I}\subseteq \bG(\le r,V):=\bigsqcup_{0\le r'\le r}\bG(r',V)\\      
      \forall(i\ne j\in I)
      \left(\pi_i\oslash_d \pi_j\right).
    \end{gathered}    
  \end{equation*}
\end{theorem}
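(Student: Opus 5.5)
The plan is to prove the two inclusions separately. Both rest on one linear-algebra observation: for subspaces $\pi\in\bG(k,V)$ and $\pi'\in\bG(k',V)$,
\begin{equation*}
  \dim \pi\cap\pi'^{\perp}-k \;=\; \dim\pi'\cap\pi^{\perp}-k' \;=\; -\operatorname{rank}\left(P_{\pi'}|_{\pi}\right),
\end{equation*}
so $\pi\oslash_d\pi'$ is symmetric and equivalent to $k+k'-\operatorname{rank}(P_{\pi'}|_{\pi})>d$. The key lemma I would isolate is the following.

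\emph{Lemma.} If $\pi\oslash_d\pi'$ and $\zeta\in\bG(d,V)$ contains $\pi$, then $P_\zeta$ is not injective on $\pi'$.

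The proof is short. Set $A:=\pi\cap\pi'^{\perp}\le\zeta$, so $\dim A>d-k'$. For $x\in\pi'$ and $a\in A$ we have $\langle P_\zeta x,a\rangle=\langle x,a\rangle=0$. Hence $P_\zeta\pi'\le\zeta\ominus A$, which has dimension $<k'$.

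Conversely, I expect that when $\pi\not\oslash_d\pi'$ a generic (or suitably chosen) $d$-plane $\zeta\ge\pi$ makes $P_\zeta$ injective on $\pi'$. To see this, choose $\zeta$ to contain $\pi$ and enough of $P_{\pi^\perp}\pi'$ so that $\zeta^\perp\cap\pi'=0$. The dimension count needed is exactly the negation of distality.

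For the "saturated" direction, let $S=\bigsqcup_i\pi_i^{\uparrow}$ with pairwise distal cores. Disjointness is automatic: an $r$-plane $\wt\pi\ge\pi_i,\pi_j$ lies in some $d$-plane $\zeta\ge\wt\pi$, and $P_\zeta$ is the identity on $\pi_j$, which contradicts the lemma. Now take $\eta\ge\pi_i$, $\eta'\ge\pi_j$ and $\zeta\ge\eta$ with $\eta''=P_\zeta\eta'$ an $r$-plane.
\begin{itemize}
\item If $i\ne j$, then $P_\zeta$ is injective on $\eta'\ge\pi_j$ while $\zeta\ge\pi_i$, contradicting the lemma. So such configurations never occur.
\item If $i=j$, then $\eta''\ge P_\zeta\pi_i=\pi_i$ because $\pi_i\le\eta\le\zeta$. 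Hence $\eta''\in\pi_i^{\uparrow}$.
\end{itemize}

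For the converse, let $S$ be $\tau_d$-saturated. Declare $\eta\sim\eta'$ in $S$ if some $d$-plane $\zeta\ge\eta$ has $P_\zeta$ injective on $\eta'$, and take the equivalence relation this generates. By the remark above, $\sim$ is (up to symmetrization) the relation $\eta\not\oslash_d\eta'$ on $r$-planes. For each class $C$ set $\pi_C:=\bigcap_{\eta\in C}\eta\in\bG(\le r,V)$; clearly $C\subseteq\pi_C^{\uparrow}$. The core step is the reverse inclusion $\pi_C^{\uparrow}\subseteq S$.

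I would prove this by running the spine-generation argument behind \cite[Theorem 1.8]{MR5050586} locally. That argument starts from a non-distal pair $\eta,\eta'$, varies $\zeta$ over the $d$-planes containing $\eta$ for which $P_\zeta$ is injective on $\eta'$, and shows that the resulting projections $P_\zeta\eta'$, iterated, sweep out all of $(\eta\cap\eta')^{\uparrow}$. Chaining along $\sim$-paths then gives all $r$-planes over the intersection of the whole class. The hypothesis $2r\le d$ in the earlier paper was used only to guarantee that every pair is non-distal; here the non-distality is supplied by the definition of $\sim$. The $\tau$-closure of the new planes stays inside $C$, because they contain $\pi_C$ and hence are non-distal to the members of $C$ exactly as in the first direction.

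Finally, for distinct classes $C\ne C'$, I would show $\pi_C\oslash_d\pi_{C'}$. Suppose not. Then, since $\pi_C^{\uparrow}=C$ and $\pi_{C'}^{\uparrow}=C'$, we can choose $\eta\ge\pi_C$ and $\eta'\ge\pi_{C'}$ with $\eta\not\oslash_d\eta'$, for instance by letting the extra directions of $\eta$ and $\eta'$ be non-orthogonal. This gives $\eta\sim\eta'$, contradicting $C\ne C'$.

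The main obstacle is the local spine-generation step, namely showing that a single $\sim$-class fills out the whole spine over its common intersection. In particular one has to check that the earlier paper's moves only ever need pairs that are non-distal, rather than the global inequality $2r\le d$. I would try first to redo that argument for a two-element non-distal pair and then induct on $r-\dim(\eta\cap\eta')$.
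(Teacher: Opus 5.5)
\textbf{Verdict: genuine gap.} The central claim of the hard direction is deferred to an unverified ``run the old argument locally'' step.

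\textbf{What is correct.} Your easy parts hold up. The rank identity $\dim\pi\cap\pi'^{\perp}-k=\dim\pi'\cap\pi^{\perp}-k'=-\operatorname{rank}(P_{\pi'}|_{\pi})$ is right, so $\oslash_d$ is in fact symmetric; this is cleaner than the paper's one-sided bookkeeping. Your lemma, and its converse via a generic $(n-d)$-plane $\zeta^{\perp}\le\pi^{\perp}$, are correct. They give disjointness and saturation of distal spine unions exactly as you say. They also give your final step, producing non-distal $r$-planes over non-distal cores.

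\textbf{The gap.} It is the step you flag yourself: a $\tau_d$-saturated $S$ containing a non-distal pair $\eta,\eta'$ must contain $(\eta\cap\eta')^{\uparrow}$. The hard direction reduces to this, and conversely it follows at once from the theorem. Saying the tame-regime argument can be ``run locally'' is not a proof. The premise that $2r\le d$ served only to make pairs non-distal is also doubtful. When $2r>d$, every admissible move $\eta''=P_\zeta\eta'$ with $\zeta\ge\eta$ places $\eta''$ and $\eta$ inside the same $d$-plane, so $\dim(\eta\cap\eta'')\ge 2r-d>0$ always. Any step of a sweep that needs a generated plane meeting its base trivially (as happens generically when $2r\le d$) is therefore unavailable, and you give no mechanism to replace it.

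\textbf{The missing idea.} Exploit that forced overlap, and use the tame theorem as a black box rather than re-running its proof.
\begin{enumerate}
\item After passing to $\bullet\ominus(\eta\cap\eta')$, you may assume $\eta\cap\eta'=0$.
\item For generic $\zeta\in\eta^{\uparrow}_{\bG(d,V)}$ one has $\eta+P_\zeta\eta'=\zeta$. Equivalently $\eta+\eta'+\zeta^{\perp}=V$, and this is where $2r\ge d$ enters. Hence $\pi:=\eta\cap P_\zeta\eta'$ has dimension exactly $2r-d$.
\item Passing to $\bullet\ominus\pi$ replaces $(r,d,n)$ by $(d-r,\,2d-2r,\,n-2r+d)$, which is the tame regime.
\item There the two trivially intersecting planes $\eta\ominus\pi$ and $P_\zeta\eta'\ominus\pi$ force the reduced saturated set to be everything, i.e.\ $\pi^{\uparrow}\subseteq S$.
\item Varying $\zeta$, and then the base plane within what $S$ is now known to contain, yields enough such $(2r-d)$-planes $\pi$ that their spines cover $\bG(r,V)$.
\end{enumerate}

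\textbf{Two smaller problems in your chaining step.}
\begin{itemize}
\item The claim ``they contain $\pi_C$ and hence are non-distal to the members of $C$'' is false. The planes $\pi\oplus A$ and $\pi\oplus B$, with $\pi,A,B$ mutually orthogonal, are $d$-distal once $2r-\dim\pi>d$. What you actually need is that non-distality is an open condition containing the diagonal, so each spine is connected by chains of non-distal pairs.
\item Merging $\sigma^{\uparrow},\sigma'^{\uparrow}\subseteq S$ with non-distal cores into $(\sigma\cap\sigma')^{\uparrow}$ does not follow from a single application of the key step. When $2r-\dim(\sigma\cap\sigma')>n$, $r$-planes over $\sigma,\sigma'$ are forced to meet in more than $\sigma\cap\sigma'$. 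This needs an iterative argument like the paper's two-spine proposition.
\end{itemize}
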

Being mutually $d$-distal entails $k>d-k'\Rightarrow 2r>d$, so in the ``tame'' range $2r\le d$ covered by the earlier work $d$-distal sets of spaces must be singletons; \Cref{th:spine.union}, then, does indeed specialize to the initial cited result. 

\subsection*{Acknowledgments}

I am grateful to \cite{MR5050586}'s anonymous referee for suggesting further examination of the case $2r>d$ left unaddressed by the earlier work. 


\section{Classifying $\tau_d$-saturated sets of $r$-planes}\label{se:clsf}

We refer freely to the \emph{Zariski topology} \cite[\S 2.1]{ms_nonl} on Grassmannians and subspaces thereof (taking for granted, say, the automatic density of non-empty open subsets of $\bG(\bullet,\bullet)$), and decorate the relevant notions with a ``Z'' subscript for clarity (dense$_Z$, open$_Z$, etc.). The caveat, throughout, is that the Zariski topologies involved are those resulting from Grassmannians' \emph{real} algebraic-variety structures: we need orthogonality (hence also complex conjugation) to be continuous. On complex Grassmannians, then, this would be the \emph{Weil restriction} \cite[\S 7.6]{blr_neron} to $\bR$ of the usual complex Zariski topology. 

\begin{remark}\label{re:not.distal.zopen}
  Note incidentally that the $d$-distal relation is closed$_Z$, for large-dimension intersection conditions are so; its negation, then, is open$_Z$: as soon as $\pi\oslash_d \pi'$ fails, it does for $\pi,\pi'$ ranging over respective Zariski neighborhoods of the originals. 
\end{remark}

An elementary linear-algebra exercise provides a few alternative characterizations of the relation of being $d$-distal. 

\begin{lemma}\label{le:d.distal}
  Let $0\le k,k'<d<n\in \bZ_{\ge 2}$ and $V$ an $n$-dimensional real or complex Hilbert space. The following conditions on subspaces $\pi\in \bG(k,V)$ and $\pi'\in \bG(k',V)$ are equivalent.
  \begin{enumerate}[(a),wide]
  \item\label{item:le:d.distal:pi.pi'} $\pi\oslash_d \pi'$.
  \item\label{item:le:d.distal:sml.sm} $\dim \left(\pi+\pi'^{\perp}\right)<n-d+\max(k,k')$.
  \item\label{item:le:d.distal:no.inj} No orthogonal projection $P_{\eta}$, $\pi'\le \eta\in \bG(d,V)$ injects on $\pi$. 
  \item\label{item:le:d.distal:eta.pi'} For all $\eta\in\pi^{\uparrow}_{\bG(<d,V)}$, $\eta\oslash_d \pi'$.
  \item\label{item:le:d.distal:pi.eta'} For all $\eta'\in\pi'^{\uparrow}_{\bG(<d,V)}$, $\pi\oslash_d \eta'$.
  \item\label{item:le:d.distal:eta.eta'} For all $\eta\in\pi^{\uparrow}_{\bG(<d,V)}$ and $\eta'\in\pi'^{\uparrow}_{\bG(<d,V)}$, $\eta\oslash_d \eta'$.  \qedhere
  \end{enumerate}  
\end{lemma}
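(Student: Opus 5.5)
The plan is to reduce everything to the single quantity $m:=\dim(\pi\cap\pi'^{\perp})$ and to read each condition as a statement about $m$. Two routine facts will be used. First, the dimension formula
\begin{equation*}
  \dim(\pi+\pi'^{\perp})=k+(n-k')-m .
\end{equation*}
Second, the symmetry $m-\dim(\pi'\cap\pi^{\perp})=k-k'$. This holds because $\pi\cap\pi'^{\perp}$ and $\pi'\cap\pi^{\perp}$ are the kernels of the projection $P_{\pi'}|_{\pi}:\pi\to\pi'$ and of its adjoint $P_{\pi}|_{\pi'}$, and these two maps have equal rank. The symmetry lets me rewrite \Cref{item:le:d.distal:pi.pi'} in the two equivalent forms $m>d-k'$ and $\dim(\pi'\cap\pi^{\perp})>d-k$.

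For \Cref{item:le:d.distal:pi.pi'}$\Leftrightarrow$\Cref{item:le:d.distal:sml.sm} I substitute $m$ into the dimension formula. The condition $m>d-k'$ becomes $\dim(\pi+\pi'^{\perp})<n-d+k$. Running the same computation with the roles of $\pi,\pi'$ swapped, via the symmetric form of \Cref{item:le:d.distal:pi.pi'}, produces the companion bound with $k'$. Assembling the two is what should yield the $\max(k,k')$ normalization. I expect this bookkeeping of which inequality dominates when $k\ne k'$ to be the only delicate point, so I would check both cases $k\ge k'$ and $k<k'$ explicitly.

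For \Cref{item:le:d.distal:pi.pi'}$\Leftrightarrow$\Cref{item:le:d.distal:no.inj}, note that $P_\eta$ injects on $\pi$ if and only if $\pi\cap\eta^{\perp}=0$. As $\eta$ ranges over the $d$-planes containing $\pi'$, the complement $\eta^{\perp}$ ranges over all $(n-d)$-dimensional subspaces of the $(n-k')$-dimensional space $\pi'^{\perp}$, i.e. over subspaces of codimension $d-k'$ there. Moreover, $\pi\cap\eta^{\perp}=(\pi\cap\pi'^{\perp})\cap\eta^{\perp}$. By a generic-position (Zariski-open) choice, a codimension-$(d-k')$ subspace of $\pi'^{\perp}$ meets the $m$-dimensional subspace $\pi\cap\pi'^{\perp}$ trivially if and only if $m\le d-k'$. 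Hence some $P_\eta$ injects on $\pi$ exactly when $m\le d-k'$, which is the negation of \Cref{item:le:d.distal:pi.pi'}.

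The remaining items I would derive from \Cref{item:le:d.distal:no.inj}, whose statement makes sense for subspaces of any dimension $<d$.

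\Cref{item:le:d.distal:eta.pi'}: if $\eta\ge\pi$ and some $P_\zeta$ with $\zeta\ge\pi'$ injected on $\eta$, it would also inject on $\pi$; so \Cref{item:le:d.distal:pi.pi'} implies \Cref{item:le:d.distal:eta.pi'}. Conversely, \Cref{item:le:d.distal:eta.pi'} implies \Cref{item:le:d.distal:pi.pi'} by taking $\eta=\pi$, which is allowed since $k<d$.

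\Cref{item:le:d.distal:pi.eta'}: if $\eta'\ge\pi'$, then every $d$-plane $\zeta\ge\eta'$ also contains $\pi'$, so \Cref{item:le:d.distal:no.inj} for $\pi'$ gives it for $\eta'$. The converse again takes $\eta'=\pi'$.

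\Cref{item:le:d.distal:eta.eta'}: combine the two previous arguments, first enlarging $\pi$ and then $\pi'$. The converse specializes to $\eta=\pi$ and $\eta'=\pi'$.
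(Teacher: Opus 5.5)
\textbf{Verdict: genuine gap.} Your arguments for (a)$\Leftrightarrow$(c)$\Leftrightarrow$(d)$\Leftrightarrow$(e)$\Leftrightarrow$(f) are sound. These are the dimension count $m+(n-d)>n-k'$ forcing $\pi\cap\eta^{\perp}\neq 0$, and the fact that injectivity is lost when $\pi$ or $\pi'$ is enlarged. The paper gives no proof of its own, only calling the lemma an elementary exercise.

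The gap is the step you postponed, (a)$\Leftrightarrow$(b). It cannot be completed for $k<k'$, because the equivalence is false there. Your own substitution gives exactly
\[
\pi\oslash_d\pi'\iff \dim(\pi+\pi'^{\perp})<n-d+k .
\]
Swapping roles does not give a second threshold for this quantity. The companion bound $\dim(\pi'+\pi^{\perp})<n-d+k'$ is about $\pi'+\pi^{\perp}$, whose dimension is $n-m=\dim(\pi+\pi'^{\perp})-(k-k')$. Rewritten in terms of $\dim(\pi+\pi'^{\perp})$, it is the same condition $<n-d+k$ again.

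So (b) matches (a) when $k\ge k'$. For $k<k'$, however, (b) reads $\dim(\pi+\pi'^{\perp})<n-d+k'$, i.e.\ $m>d-2k'+k$, which is strictly weaker than $m>d-k'$. A concrete counterexample: take $n=4$, $d=3$, $k=1$, $k'=2$, and $\pi$ a line inside $\pi'^{\perp}$.
\begin{itemize}
\item $\dim(\pi\cap\pi'^{\perp})=1=d-k'$, so (a) fails.
\item $P_{\eta}$ for $\eta=\pi'+\pi$ is the identity on $\pi$, so (c) fails as well.
\item $\dim(\pi+\pi'^{\perp})=2<3=n-d+\max(k,k')$, so (b) holds.
\end{itemize}

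No case bookkeeping can therefore produce the $\max(k,k')$ normalization as written. What you can prove is (b) with $\max(k,k')$ replaced by $k$, or equivalently (b) as stated under the extra hypothesis $k\ge k'$. That corrected form is all the paper uses, in \Cref{le:oslash.achiral}, where $k'\le k$.

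Incidentally, your rank identity $m-\dim(\pi'\cap\pi^{\perp})=k-k'$ shows that $\pi\oslash_d\pi'\iff\pi'\oslash_d\pi$ for all $k,k'$. This is stronger than the one-directional \Cref{le:oslash.achiral}, and it contradicts the paper's remark that $\oslash_d$ is not generally symmetric.
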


Note also that while $\oslash_d$ is not (generally) symmetric, one chiral variant does imply the other.

\begin{lemma}\label{le:oslash.achiral}
  For $0\le k'\le k<d<n$ and an $n$-dimensional Hilbert space $V$ we have
  \begin{equation*}
    \forall\left(\pi\in \bG(k,V),\ \pi'\in \bG(k',V)\right)
    \left(
      \pi\oslash_d \pi'
      \xRightarrow{\quad}
      \pi'\oslash_d \pi
    \right).
  \end{equation*}
\end{lemma}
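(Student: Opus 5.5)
The plan is to reduce the claim to a single rank identity between the two ``cross-orthogonal'' intersections. Set
\begin{equation*}
  a:=\dim\left(\pi\cap\pi'^{\perp}\right),
  \qquad
  a':=\dim\left(\pi'\cap\pi^{\perp}\right).
\end{equation*}
By \Cref{def:distal}, the hypothesis $\pi\oslash_d\pi'$ reads $a>d-k'$, and the desired conclusion $\pi'\oslash_d\pi$ reads $a'>d-k$. So it suffices to show
\begin{equation*}
  a'=a+k'-k.
\end{equation*}
Granting this identity, we get $a'=a-(k-k')>(d-k')-(k-k')=d-k$, which is exactly the conclusion.

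To prove the identity, consider the compressions
\begin{equation*}
  T:=P_{\pi}|_{\pi'}:\pi'\to\pi
  \quad\text{and}\quad
  T^*=P_{\pi'}|_{\pi}:\pi\to\pi'.
\end{equation*}
These are adjoint to each other, since for $x\in\pi'$ and $y\in\pi$ we have $\langle P_\pi x,y\rangle=\langle x,y\rangle=\langle x,P_{\pi'}y\rangle$. Their kernels are identified as follows:
\begin{itemize}
\item $\ker T=\pi'\cap\pi^{\perp}$, of dimension $a'$;
\item $\ker T^*=\pi\cap\pi'^{\perp}$, of dimension $a$.
\end{itemize}
Rank-nullity then gives $\operatorname{rk}T=k'-a'$ and $\operatorname{rk}T^*=k-a$. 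An operator between finite-dimensional inner-product spaces has the same rank as its adjoint, over $\bR$ and over $\bC$ alike. Equating the two ranks yields $k'-a'=k-a$, which is the identity.

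I do not expect a genuine obstacle here; the only step needing care is the adjointness claim for the compressions, which is the one-line computation above. The argument never actually uses the hypothesis $k'\le k$, so it seems to show that $\oslash_d$ is symmetric in general. I would double-check this against \Cref{def:distal} before relying on it, but for the stated implication the argument suffices as is.
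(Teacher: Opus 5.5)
Your proof is correct, and it proves the same exact identity that underlies the paper's proof, $\dim(\pi'\cap\pi^{\perp})=\dim(\pi\cap\pi'^{\perp})+k'-k$, by a different mechanism.

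\textbf{The two routes.} The paper goes through \Cref{le:d.distal}\Cref{item:le:d.distal:sml.sm}. It rewrites $\pi\oslash_d\pi'$ as $\dim(\pi+\pi'^{\perp})<n-d+\max(k,k')$, uses $k'\le k$ to replace the maximum by $k$, and then takes orthogonal complements via $(\pi+\pi'^{\perp})^{\perp}=\pi'\cap\pi^{\perp}$. In effect this is Grassmann's dimension formula applied to $\pi+\pi'^{\perp}$. You instead read the identity off from the rank equality between the compression $P_{\pi}|_{\pi'}$ and its adjoint $P_{\pi'}|_{\pi}$. Your adjointness computation and kernel identifications are right. Both arguments are one-line dimension counts. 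Yours is self-contained and does not rely on the $\max$-form of item (b). That form agrees with item (a) only when $k\ge k'$; for $k<k'$ it is strictly weaker. For instance, with $n=5$, $d=3$, a line $\pi$ orthogonal to a plane $\pi'$ satisfies (b) but not (a).

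\textbf{What your version shows.} It makes visible that the hypothesis $k'\le k$ is superfluous, and your suspicion is correct. Because the identity is exact, $\dim(\pi\cap\pi'^{\perp})>d-k'\iff\dim(\pi'\cap\pi^{\perp})>d-k$ holds for all $k,k'$. So with \Cref{def:distal} as written, $\oslash_d$ is symmetric. In the paper's proof, $k'\le k$ enters only through the $\max(k,k')$ in item (b). Using the exact count $\dim(\pi+\pi'^{\perp})=n+k-k'-\dim(\pi\cap\pi'^{\perp})$ there would remove that hypothesis as well. The paper's remark that $\oslash_d$ is ``not (generally) symmetric'' is therefore not supported by its own definition.
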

\begin{proof}
  Given the hypothesis, \Cref{le:d.distal}\Cref{item:le:d.distal:sml.sm} reads $\dim \left(\pi+\pi'^{\perp}\right)<n-d+\max(k,k')=n-d+k$. Taking orthogonal complements, $\dim \pi'\cap \pi^{\perp}>d-k$: precisely the condition $\pi'\oslash_d \pi$.
\end{proof}

A slightly more pedantic rendition of \Cref{le:d.distal}\Cref{item:le:d.distal:pi.eta'} would be
\begin{equation*}
  \forall\left(k'\le r<d\right)
  \forall\left(\eta'\in \pi'^{\uparrow}_{\bG(r,V)}\right)
  \left(\pi\oslash_d \eta'\right). 
\end{equation*}

its equivalence to $\pi\oslash_d \pi'$ can be amplified in at least two ways: the first quantifier can be \emph{existential}, and the second can range over dense$_Z$ sets only rather than be fully universal.

\begin{proposition}\label{pr:all.r.planes.k'.oslash}
  Let $\pi\in \bG(k,V)$ and $\pi'\in \bG(k',V)$ for $0\le k,k'<d<n\in \bZ_{\ge 2}$ and an $n$-dimensional Hilbert space $V$.

  The $d$-distal relation $\pi\oslash_d \pi'$ is equivalent to
  \begin{equation*}
    \exists\left(k'\le r<d\right)
    \forall\left(\eta'\in U\overset{\text{dense$_Z$}}{\subseteq}\pi'^{\uparrow}_{\bG(r,V)}\right)
    \left(\pi\oslash_d \eta'\right). 
  \end{equation*}
\end{proposition}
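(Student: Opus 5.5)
The plan is to get the forward implication from \Cref{le:d.distal}, and to prove the converse by first upgrading ``dense$_Z$'' to ``all'' via Zariski-closedness and then testing against one well-chosen $r$-plane.

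\textbf{Forward implication.} Assume $\pi\oslash_d\pi'$. Take $r:=k'$, so that $\pi'^{\uparrow}_{\bG(k',V)}=\{\pi'\}$, and set $U:=\{\pi'\}$; the displayed condition holds trivially. Alternatively, for any $k'\le r<d$ we may take $U$ to be the whole spine, and \Cref{le:d.distal}\Cref{item:le:d.distal:pi.eta'} supplies $\pi\oslash_d\eta'$ for every $\eta'$ in it.

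\textbf{Converse, step 1: from a dense$_Z$ set to the whole spine.} Fix $r$ with $k'\le r<d$ and a dense$_Z$ subset $U\subseteq \pi'^{\uparrow}_{\bG(r,V)}$ on which $\pi\oslash_d\eta'$ holds. With $\pi$ fixed, the condition
\begin{equation*}
  \eta'\mapsto \bigl(\dim \pi\cap\eta'^{\perp}>d-r\bigr)
\end{equation*}
is closed$_Z$ in $\eta'$ by \Cref{re:not.distal.zopen}. Here it matters that the Zariski topology is the real one, so that $\eta'\mapsto\eta'^{\perp}$ is regular. The condition therefore holds on the closure$_Z$ of $U$ in the spine, which is the whole spine $\pi'^{\uparrow}_{\bG(r,V)}$. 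I expect this to be the only step needing care: one must check that the closedness is relative to the spine, and that ``dense$_Z$'' refers to density in the spine itself.

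\textbf{Converse, step 2: testing against a specific $r$-plane.} Set $W:=\pi\cap\pi'^{\perp}$ and $w:=\dim W$. Every $\eta'$ in the spine has the form $\eta'=\pi'\oplus\sigma$ with $\sigma\le\pi'^{\perp}$ and $\dim\sigma=r-k'$. Then $\eta'^{\perp}=\pi'^{\perp}\cap\sigma^{\perp}$, and hence
\begin{equation*}
  \pi\cap\eta'^{\perp}=W\cap\sigma^{\perp}.
\end{equation*}
We now choose $\sigma$ so that it absorbs as much of $W$ as possible, and split into two cases.
\begin{itemize}
\item If $r-k'>w$, take $\sigma\ge W$, which is possible since $W\le\pi'^{\perp}$. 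Then $\pi\cap\eta'^{\perp}=0$, and step 1 would give $0>d-r$. This contradicts $r<d$, so this case cannot occur.
\item If $r-k'\le w$, take $\sigma\le W$. Then $\dim W\cap\sigma^{\perp}=w-(r-k')$, and step 1 gives $w-(r-k')>d-r$.
\end{itemize}
The inequality in the second case is exactly $\dim\pi\cap\pi'^{\perp}>d-k'$, that is, $\pi\oslash_d\pi'$.
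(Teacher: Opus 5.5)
Your proposal is correct and follows essentially the paper's argument: both rest on the upper semicontinuity of $\eta'\mapsto\dim \pi\cap\eta'^{\perp}$ on the spine, whose minimal value $\max(0,\dim\pi\cap\pi'^{\perp}+k'-r)$ is then forced to exceed $d-r\ge 1$. The only difference is the direction of the argument. The paper intersects your dense set with the open$_Z$ dense$_Z$ locus where this minimal value is attained, whereas you pass to the Zariski closure and test on an explicit minimizer $\eta'=\pi'\oplus\sigma$, which also handles the $\max(0,\cdot)$ case cleanly.
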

\begin{proof}
  The forward implication is immediate from \Cref{le:d.distal}, given that the displayed condition is formally weaker than \Cref{le:d.distal}\Cref{item:le:d.distal:eta.pi'}; we thus focus on $(\Leftarrow)$, having fixed an $r\in [k',d)$ as in the statement.

  The assumption is that $\pi$ intersects every $(n-r)$-space in a dense$_Z$ subset of $\bG(n-r,\pi'^{\perp})$ along a $(>d-r)$-dimensional subspace. The intersections $\zeta\cap \pi=\zeta\cap\left(\pi\cap \pi'^{\perp}\right)$, though, are generically small: for an open$_Z$ dense$_Z$ $U'\subseteq \bG(n-r,\pi'^{\perp})$ all such with $\zeta\in U'$ will have minimal theoretical dimension $\max(0,\dim \pi\cap \pi'^{\perp}+k'-r)$. It must thus be the case that
  \begin{equation*}
    \dim \pi\cap \pi'^{\perp}+k'-r
    >d-r
    \xRightarrow{\quad}
    \dim \pi\cap \pi'^{\perp}
    >d-k',
  \end{equation*}
  concluding.
\end{proof}

The observation just made has its mirror counterpart. 

\begin{proposition}\label{pr:all.r.planes.k.oslash}
  Let $\pi\in \bG(k,V)$ and $\pi'\in \bG(k',V)$ for $0\le k, k'<d<n\in \bZ_{\ge 2}$ and an $n$-dimensional Hilbert space $V$.

  The $d$-distal relation $\pi\oslash_d \pi'$ is equivalent to
  \begin{equation*}
    \exists\left(k,k'\le r<d\right)
    \forall\left(\eta\in U\overset{\text{dense$_Z$}}{\subseteq}\pi^{\uparrow}_{\bG(r,V)}\right)
    \left(\eta\oslash_d \pi'\right). 
  \end{equation*}
\end{proposition}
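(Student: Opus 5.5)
The forward implication is immediate from \Cref{le:d.distal}\Cref{item:le:d.distal:eta.pi'}: if $\pi\oslash_d\pi'$, then \emph{every} $\eta\in\pi^{\uparrow}_{\bG(r,V)}$ satisfies $\eta\oslash_d\pi'$, for any $r$ with $k,k'\le r<d$. For the converse I will mirror the proof of \Cref{pr:all.r.planes.k'.oslash}, but the genericity computation now takes place in the quotient $V/\pi$ rather than inside $\pi'^{\perp}$. Fix $r\in[\max(k,k'),d)$ and a dense$_Z$ set $U\subseteq\pi^{\uparrow}_{\bG(r,V)}$ as in the statement. Set $a:=\dim \pi\cap\pi'^{\perp}$. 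The hypothesis says that $\dim\eta\cap\pi'^{\perp}>d-k'$ for all $\eta\in U$.

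First I compute the generic value of $\dim\eta\cap\pi'^{\perp}$ for $\eta\ge\pi$. Identify $\pi^{\uparrow}_{\bG(r,V)}$ with $\bG(r-k,V/\pi)$ via $\eta\mapsto\eta/\pi$; this is an isomorphism of real varieties, and one can also use $\pi^{\perp}$ in place of $V/\pi$ to stay within Hilbert-space Grassmannians. Consider the quotient map $\eta\cap\pi'^{\perp}\to V/\pi$. Its kernel is $\pi\cap\pi'^{\perp}$, of dimension $a$, and its image is $(\eta/\pi)\cap\left((\pi+\pi'^{\perp})/\pi\right)$. The subspace $(\pi+\pi'^{\perp})/\pi$ of the $(n-k)$-dimensional space $V/\pi$ has dimension $(n-k'+k-a)-k=n-k'-a$. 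A generic $(r-k)$-plane therefore meets it in dimension
\begin{equation*}
  \max\bigl(0,\ (r-k)+(n-k'-a)-(n-k)\bigr)=\max(0,\ r-k'-a).
\end{equation*}
This holds on an open$_Z$ dense$_Z$ set $U'\subseteq\pi^{\uparrow}_{\bG(r,V)}$, because the dimension of an intersection is upper semicontinuous in the Zariski topology: the locus where it exceeds the expected value is cut out by rank conditions, and it is proper. Consequently, for $\eta\in U'$,
\begin{equation*}
  \dim \eta\cap\pi'^{\perp}=a+\max(0,r-k'-a)=\max(a,\ r-k').
\end{equation*}

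Since $U$ is dense$_Z$ and $U'$ is open$_Z$ and non-empty, they intersect. Choose $\eta\in U\cap U'$. Then $\max(a,r-k')>d-k'$. Because $r<d$, we have $r-k'<d-k'$, so the maximum must be attained by $a$. Hence $a>d-k'$, which is exactly $\pi\oslash_d\pi'$.

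The step I expect to need the most care is the genericity claim. It has to be justified in the real Zariski topology, as the paper's conventions require, and one must check that the set $U'$ is genuinely non-empty. The bound $n-k'-a\le n-k$ does not always hold, so non-emptiness is best argued by exhibiting one $(r-k)$-plane in general position with respect to the fixed subspace $(\pi+\pi'^{\perp})/\pi$. Openness then follows from lower semicontinuity of the rank of the map $(\eta/\pi)\oplus\left((\pi+\pi'^{\perp})/\pi\right)\to V/\pi$.
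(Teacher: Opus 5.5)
Your proof is correct, but it is organized differently from the paper's. The paper splits the converse into the cases $k\le k'$ and $k>k'$.

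For $k\le k'$ it does no new computation. It uses \Cref{le:oslash.achiral} to turn $\eta\oslash_d\pi'$ into $\pi'\oslash_d\eta$; this is where $r\ge k'$ is needed. It then applies \Cref{pr:all.r.planes.k'.oslash} with the roles of the two subspaces exchanged to get $\pi'\oslash_d\pi$, and flips back via \Cref{le:oslash.achiral} using $k\le k'$.

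Only for $k>k'$ does the paper count generic dimensions in the spine $\pi^{\uparrow}$, and it does so contrapositively. Writing $\dim\pi\cap\pi'^{\perp}=d-k'-t$ with $t\ge 0$, it bounds the generic $\dim\eta\cap\pi'^{\perp}$ by $(d-k'-t)+t=d-k'$.

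Your argument is that same count, done exactly (generic value $\max(a,r-k')$) and applied uniformly. It shows that the count uses neither $k>k'$ nor $r\ge k'$. So the case distinction is dispensable, and the existential quantifier could even range over $k\le r<d$. The paper's route saves work by recycling the mirror proposition and the chirality lemma; yours is self-contained and slightly sharper.

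One small correction to your closing remark: the inequality $n-k'-a\le n-k$ always holds. This is because $(\pi+\pi'^{\perp})/\pi$ is a subspace of $V/\pi$ (equivalently, $a\ge k-k'$). Non-emptiness of $U'$ is therefore just the standard general-position fact, with no exceptional case to treat.
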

\begin{proof}  
  The implication $(\Rightarrow)$ is even simpler in this case: if $\pi$ intersects $\pi'^{\perp}$ along a $(>d-k')$-dimensional subspace, so will the larger $\eta\ge \pi$; the substance is thus again $(\Leftarrow)$.
  
  \begin{enumerate}[(I),wide]
  \item\textbf{: $k\le k'$.} Since $r\ge k'$, $\eta\oslash_d \pi'$ respectively entail $\pi'\oslash_d\eta$ by \Cref{le:oslash.achiral}. This in turn implies $\pi'\oslash_d\pi$ by \Cref{pr:all.r.planes.k'.oslash}, hence $\pi\oslash_d\pi'$ by \Cref{le:oslash.achiral} again, given the assumption $k\le k'$.

  \item\textbf{: $k>k'$.} Suppose $\dim \pi\cap \pi'^{\perp}=d-k'-t$ for some $t\in \bZ_{\ge 0}$. We then have $\dim \left(\pi+\pi'^{\perp}\right)^{\perp}=d-k-t$ (for $k=\max(k,k')$; cf. \Cref{le:d.distal}\Cref{item:le:d.distal:sml.sm}); that value dominating $r-k-t$, an open$_Z$ dense$_Z$ set of $r$-planes containing $\pi$ will intersect $\pi'^{\perp}$ along spaces of dimension $\le \left(d-k'-t\right)+t=d-k$. This contradicts the hypothesis, yielding the conclusion. 
  \end{enumerate}
\end{proof}

Jointly, \Cref{pr:all.r.planes.k'.oslash,pr:all.r.planes.k.oslash} imply

\begin{corollary}\label{cor:all.rs.planes.oslash}
  Let $\pi\in \bG(k,V)$ and $\pi'\in \bG(k',V)$ for $0\le k, k'<d<n\in \bZ_{\ge 2}$ and an $n$-dimensional Hilbert space $V$.

  The $d$-distal relation $\pi\oslash_d \pi'$ is equivalent to
  \begin{equation*}
    \exists\left(k,k'\le r<d\right)
    \begin{gathered}
      \forall\left(\eta\in U\overset{\text{dense$_Z$}}{\subseteq}\pi^{\uparrow}_{\bG(r,V)}\right)\\
      \forall\left(\eta'\in U'\overset{\text{dense$_Z$}}{\subseteq}\pi'^{\uparrow}_{\bG(r,V)}\right)
    \end{gathered}
    \left(\eta\oslash_d \eta'\right). 
  \end{equation*}
  \qedhere
\end{corollary}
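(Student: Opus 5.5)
The plan is to prove \Cref{cor:all.rs.planes.oslash} by chaining \Cref{pr:all.r.planes.k.oslash} and \Cref{pr:all.r.planes.k'.oslash}, fixing one index $r$ throughout. The two directions will be handled separately.

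For $(\Rightarrow)$, take any $r$ with $\max(k,k')\le r<d$; such an $r$ exists because $k,k'<d$. Set $U:=\pi^{\uparrow}_{\bG(r,V)}$ and $U':=\pi'^{\uparrow}_{\bG(r,V)}$, which are trivially dense$_Z$. By \Cref{le:d.distal}\Cref{item:le:d.distal:eta.eta'}, $\pi\oslash_d\pi'$ gives $\eta\oslash_d\eta'$ for all $\eta\in U$ and $\eta'\in U'$.

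For $(\Leftarrow)$, fix $r$, $U$ and $U'$ as in the statement and argue in two stages.
\begin{enumerate}[(1),wide]
\item Fix $\eta\in U$. The hypothesis says $\eta\oslash_d\eta'$ for all $\eta'$ in the dense$_Z$ subset $U'\subseteq\pi'^{\uparrow}_{\bG(r,V)}$, with $k'\le r<d$. The $(\Leftarrow)$ direction of \Cref{pr:all.r.planes.k'.oslash}, applied to the pair $(\eta,\pi')$ with $\eta$ of dimension $r<d$, yields $\eta\oslash_d\pi'$.
\item Since $\eta\in U$ was arbitrary, $\eta\oslash_d\pi'$ holds on the dense$_Z$ set $U\subseteq\pi^{\uparrow}_{\bG(r,V)}$, and $k,k'\le r<d$. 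The $(\Leftarrow)$ direction of \Cref{pr:all.r.planes.k.oslash} then gives $\pi\oslash_d\pi'$.
\end{enumerate}

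The only step needing care is checking that the dimensional hypotheses of the two propositions hold in each application. In stage (1) the first argument is the $r$-plane $\eta$, and the propositions allow a first argument of any dimension below $d$, so this is fine. In stage (2), \Cref{pr:all.r.planes.k.oslash} requires $r\ge\max(k,k')$, which is exactly the range of $r$ in the corollary's quantifier; this is why the corollary quantifies over $k,k'\le r$ rather than just $k'\le r$. Beyond this bookkeeping I do not expect any obstacle: all the substantive Zariski-genericity arguments are contained in the two propositions.
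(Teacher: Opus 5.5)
Your proof is correct and is essentially the paper's own argument: the paper states the corollary as the joint consequence of \Cref{pr:all.r.planes.k'.oslash,pr:all.r.planes.k.oslash} without writing out further details. Your $(\Rightarrow)$ direction via \Cref{le:d.distal}\Cref{item:le:d.distal:eta.eta'} and your two-stage chaining for $(\Leftarrow)$ (first $\eta\oslash_d\pi'$ for each fixed $\eta\in U$, then $\pi\oslash_d\pi'$) fill in exactly that reduction, with the dimensional hypotheses checked correctly.
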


This has a consequence pertinent to \Cref{th:spine.union}, recorded in \Cref{pr:red.when.not.dstl}. With a view to stating it, consider the following notion.

\begin{definition}\label{def:spin.d.clsd}
  Let $1\le r<d<n\in \bZ_{\ge 3}$ and $V$ an $n$-dimensional real or complex Hilbert space.
  
  A subset $S\subseteq \bG(r,V)$ is \emph{spine$_d$-closed} if it contains the entire spine $\left(\eta\cap\eta'\right)^{\uparrow}$ whenever $\eta,\eta'\in S$ are not $d$-distal.
\end{definition}

It will be worth noting that the \emph{core} $\pi$ of a spine $\pi^{\uparrow}_{\bG(r,V)}$ is uniquely attached to that spine; this will be implicit in much of the sequel.  

\begin{proposition}\label{pr:red.when.not.dstl}
  Let $1\le r<d<n\in \bZ_{\ge 3}$ and $V$ an $n$-dimensional real or complex Hilbert space.
  
  If $S\subseteq \bG(r,V)$ is spine$_d$-closed in the sense of \Cref{def:spin.d.clsd} then it is of the form specified in the statement of \Cref{th:spine.union}.
\end{proposition}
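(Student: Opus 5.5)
The plan is to extract the cores $\pi_i$ as the minimal cores of spines contained in $S$. Consider the family
\begin{equation*}
  \mathcal{C}(S):=\left\{\pi\in \bG(\le r,V)\ :\ \pi^{\uparrow}_{\bG(r,V)}\subseteq S\right\}.
\end{equation*}
It covers $S$, since $\eta^{\uparrow}_{\bG(r,V)}=\{\eta\}$ for $\eta\in S$. Because dimensions are bounded below, every $\pi\in\mathcal{C}(S)$ contains some minimal element of $\mathcal{C}(S)$. Let $\{\pi_i\}_{i\in I}$ be the set of minimal elements of $\mathcal{C}(S)$. Then $S=\bigcup_i (\pi_i)^{\uparrow}$, and it remains to show two things: that $\pi_i\oslash_d\pi_j$ for $i\ne j$, and that the union is disjoint.

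Disjointness follows from distality. If $\eta\in(\pi_i)^{\uparrow}\cap(\pi_j)^{\uparrow}$, then \Cref{le:d.distal}\Cref{item:le:d.distal:eta.eta'}, applied with $\eta=\eta'$, gives $\eta\oslash_d\eta$. That means $\dim\eta\cap\eta^{\perp}>d-r\ge 1$, which is absurd.

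The main step is a merging claim: if $\pi,\pi'\in\mathcal{C}(S)$ and $\pi\oslash_d\pi'$ fails, then $\mathcal{C}(S)$ contains a space strictly smaller than $\pi$ or $\pi'$ (unless $\pi\le\pi'$ or vice versa). Applied to minimal cores, this contradicts minimality, forcing $\pi_i\oslash_d\pi_j$ for every ordered pair $i\ne j$. To prove the claim, fix $r$ itself, which satisfies $k,k'\le r<d$. Negating \Cref{cor:all.rs.planes.oslash} shows that the non-distal pairs $(\eta,\eta')\in\pi^{\uparrow}\times\pi'^{\uparrow}$ meet every product of dense$_Z$ sets. By \Cref{re:not.distal.zopen} this set of non-distal pairs is open$_Z$, so it is open$_Z$ dense$_Z$ in the irreducible variety $\pi^{\uparrow}_{\bG(r,V)}\times\pi'^{\uparrow}_{\bG(r,V)}$. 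Intersecting with the open$_Z$ dense$_Z$ locus where $\dim(\eta\cap\eta')$ is minimal, we get a generic non-distal pair. Spine$_d$-closedness then puts $(\eta\cap\eta')^{\uparrow}$ into $S$, so $\eta\cap\eta'\in\mathcal{C}(S)$.

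I expect the obstacle to be ensuring that $\eta\cap\eta'$ actually shrinks below $\pi$ (or $\pi'$). Generically $\eta\cap\eta'\supseteq\pi\cap\pi'$, but it may be larger when $2r$ is large relative to $n$; also $\eta\cap\eta'$ need not sit inside $\pi$. I would first try to choose $\eta\ge\pi$ generic and then $\eta'\ge\pi'$ generic relative to $\eta$. The aim is $\dim(\eta\cap\eta')<k$ or at least $\not\ge\pi$. Failing that, I would iterate: replace $\pi$ by the new core $\eta\cap\eta'$ and rerun the argument. Non-distality persists for generic choices by \Cref{le:d.distal}\Cref{item:le:d.distal:eta.pi'} in contrapositive form. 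After finitely many steps the dimension strictly drops, and minimality is contradicted.
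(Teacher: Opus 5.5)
\textbf{There is a genuine gap, in the step you flag yourself, and the iteration you propose does not close it.} The first part of your argument is sound and matches the paper's setup. The minimal elements of $\mathcal{C}(S)$ are exactly the cores of the inclusion-maximal spines in $S$. Disjointness via \Cref{le:d.distal}\Cref{item:le:d.distal:eta.eta'} with $\eta=\eta'$ is fine. Negating \Cref{cor:all.rs.planes.oslash} and using \Cref{re:not.distal.zopen} does give an open$_Z$ dense$_Z$ set of non-distal pairs $(\eta,\eta')\in\pi^{\uparrow}\times\pi'^{\uparrow}$, each of which puts $(\eta\cap\eta')^{\uparrow}$ into $S$.

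The problem is dimension. For generic $\eta\ge\pi$ and $\eta'\ge\pi'$ one has $\dim(\eta\cap\eta')=\max\left(2r-n,\dim(\pi\cap\pi')\right)$, and any two $r$-planes meet in dimension at least $2r-n$. So if $2r-n\ge\dim\pi$, no core of the form $\eta\cap\eta'$, from any pair at all and iterated or not, can be strictly contained in $\pi$; likewise for $\pi'$. Concretely, take $n=10$, $r=8$, $d=9$ and distinct lines $\pi,\pi'$, which are automatically non-$9$-distal. Every core you can produce has dimension $\ge 6$, the dimension never drops below that, and the only space that would contradict minimality is $\pi\cap\pi'=\{0\}$. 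More basically, a new element of $\mathcal{C}(S)$ that is not a subspace of $\pi$ or $\pi'$ says nothing about their minimality. It merely lies above some, possibly third, minimal core.

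\textbf{The missing idea} is to aim directly at $\pi\cap\pi'$ and to reach it by aggregating many pairs rather than using one pair at a time; this is what the paper does. The covering principle of \Cref{pr:if.2.spines} shows that the spines $(\eta\cap\eta')^{\uparrow}$, with $(\eta,\eta')$ ranging over generic non-distal pairs, jointly fill an open$_Z$ dense$_Z$ subset $W\subseteq(\pi\cap\pi')^{\uparrow}$, so $W\subseteq S$.

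A second application of spine$_d$-closedness, to generic (hence non-distal) pairs in $W$, then fills all of $(\pi\cap\pi')^{\uparrow}$. For instance, given $\xi\in(\pi\cap\pi')^{\uparrow}$:
\begin{itemize}
\item take $\theta\in W$ with $\kappa:=\theta\cap\xi$ of minimal dimension;
\item take $\theta'\in W\cap\kappa^{\uparrow}$ generic, which is possible since this set is non-empty (it contains $\theta$) and hence dense$_Z$ in $\kappa^{\uparrow}$;
\item then $\theta\cap\theta'=\kappa\le\xi$.
\end{itemize}
Hence $\pi\cap\pi'\in\mathcal{C}(S)$. Since $\pi\cap\pi'$ lies in both cores and is strictly smaller than at least one of them, minimality fails.

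Your single-pair argument suffices only in the regime $2r\le n+\dim(\pi\cap\pi')$, where a generic pair already has $\eta\cap\eta'=\pi\cap\pi'$.
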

\begin{proof}

  $S$ will at any rate be a union (possibly not disjoint, a priori) of inclusion-maximal spines $\pi_i^{\uparrow}=\left(\pi_i\right)_{\bG(r,\bV)}^{\uparrow}$, this being true of any set of $r$-planes whatsoever. It will suffice to argue that the cores $\pi_i$ thereof must, under the hypotheses, be mutually $d$-distal. Consider two of the said maximal spines, say $\pi^{\uparrow}$ and $\pi'^{\uparrow}$; proving their cores $d$-distal (in either direction) amounts, by \Cref{cor:all.rs.planes.oslash}, to showing that
  \begin{equation*}
    \forall\left(\eta\in U\overset{\text{dense$_Z$}}{\subseteq}\pi^{\uparrow}_{\bG(r,V)}\right)
    \forall\left(\eta'\in U'\overset{\text{dense$_Z$}}{\subseteq}\pi'^{\uparrow}_{\bG(r,V)}\right)
    \left(\eta\oslash_d \eta'\right). 
  \end{equation*}
  Were this not the case, $\eta$ and $\eta'$ would fail to be $d$-distal when $\eta$ and $\eta'$ range over open$_Z$ dense$_Z$ subsets of $\pi^{\uparrow}$ and $\pi'^{\uparrow}$ respectively (\Cref{re:not.distal.zopen}). $S$ then contains an open$_Z$ dense$_Z$ subset of $\left(\pi\cap \pi'\right)^{\uparrow}_{\bG(r,V)}$ by \Cref{pr:if.2.spines}; generic pairs of $r$-planes in that set are non-$d$-distal, and spine$_d$ closure applied to such pairs will yield the maximality-contradicting conclusion that $\left(\pi\cap \pi'\right)^{\uparrow}_{\bG(r,V)}\subseteq S$.
\end{proof}

The following general principle is employed in \Cref{pr:red.when.not.dstl} and is also discernible in the subsequent proof of \Cref{th:spine.union}. 

\begin{proposition}\label{pr:if.2.spines}
  Let $0\le k,k'\le r<n\in \bZ_{\ge 1}$, $V$ an $n$-dimensional Hilbert space, and $\pi,\pi'$ elements of $\bG(k,V)$ and $\bG(k',V)$  respectively.

  If $S\subseteq \bG(r,V)$ contains $\left(\eta\cap \eta'\right)^{\uparrow}_{\bG(r,V)}$ for $\eta,\eta'$ ranging over open$_Z$ dense$_Z$ subsets of $\pi^{\uparrow}$ and $\pi'^{\uparrow}$ respectively, then $S$ contains an open$_Z$ dense$_Z$ subset of $\left(\pi\cap \pi'\right)^{\uparrow}$. 
\end{proposition}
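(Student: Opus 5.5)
The plan is to exhibit, for a generic $\rho\in(\pi\cap\pi')^{\uparrow}_{\bG(r,V)}$, a pair $(\eta,\eta')\in U\times U'$ with $\eta\cap\eta'\le\rho$. Here $U\subseteq\pi^{\uparrow}$ and $U'\subseteq\pi'^{\uparrow}$ are the given open$_Z$ dense$_Z$ sets. Containment $\eta\cap\eta'\le\rho$ forces $\rho\in(\eta\cap\eta')^{\uparrow}\subseteq S$, so it suffices to prove that the set of such $\rho$ contains an open$_Z$ dense$_Z$ subset of $(\pi\cap\pi')^{\uparrow}$. I would phrase everything through the incidence variety
\begin{equation*}
  \Gamma:=\left\{(\eta,\eta',\rho)\in \pi^{\uparrow}\times\pi'^{\uparrow}\times(\pi\cap\pi')^{\uparrow}\ :\ \eta\cap\eta'\le\rho\right\}
\end{equation*}
and its projection $p_3$ to the third factor. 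I would then argue in three steps.

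\textbf{Step 1: the projection is surjective.} Fix $\rho\ge\pi\cap\pi'$ of dimension $r$. Set $\eta:=\pi+\sigma$ for a generic complement, chosen so that $\eta$ is an $r$-plane containing $\pi$ with $\eta\cap\pi'=\pi\cap\pi'$. Set $\eta':=\pi'+\sigma'$ in the same way, and arrange $\eta\cap\eta'=\pi\cap\pi'$ by choosing $\sigma,\sigma'$ generically. This requires $\dim(\pi+\pi')+(r-k)+(r-k')\le n$ in a naive count. If that fails, the intersection $\eta\cap\eta'$ is forced to be large, and one instead chooses $\eta,\eta'$ so that their forced extra intersection lands inside $\rho$. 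Concretely, put $\eta:=\pi+\alpha$ and $\eta':=\pi'+\alpha'$ with $\alpha,\alpha'$ chosen largely inside $\rho$, so that the excess intersection sits in $\rho$.

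\textbf{Step 2: generic points of $\Gamma$ lie over $U\times U'$.} The set $\Gamma_{\rho}$ of valid pairs for a given $\rho$ is a constructible set. I would show that for $\rho$ in an open$_Z$ dense$_Z$ subset, $\Gamma_\rho$ has a component that dominates both $\pi^{\uparrow}$ and $\pi'^{\uparrow}$ under the coordinate projections. Given that, $\Gamma_\rho$ meets $U\times U'$. A cleaner route exploits the isometry group: the stabiliser $G$ of the pair $(\pi,\pi')$ acts on everything, and $\Gamma$ is $G$-stable. So I would argue that an irreducible component $\Gamma_0$ of $\Gamma$ dominates all three factors. Then $\Gamma_0\cap(U\times U'\times(\pi\cap\pi')^{\uparrow})$ is nonempty open in $\Gamma_0$, and its image under $p_3$ is constructible and dense. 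A constructible dense set contains an open$_Z$ dense$_Z$ subset, which is Chevalley's theorem applied over $\bR$ to the real-algebraic structure the paper fixes.

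\textbf{Step 3: the main obstacle.} The hard part is producing the dominating component $\Gamma_0$. This needs a single irreducible family of triples whose projections are simultaneously dominant onto $\pi^{\uparrow}$, $\pi'^{\uparrow}$ and $(\pi\cap\pi')^{\uparrow}$. I would construct it explicitly: take $\rho$ arbitrary, then $\eta$ arbitrary in $\pi^{\uparrow}$. Then choose $\eta'\in\pi'^{\uparrow}$ constrained by $\eta\cap\eta'\le\rho$. This last condition is a Schubert-type incidence, and the task is to check that its solution set is irreducible and nonempty for generic $(\eta,\rho)$, and that varying $(\eta,\rho)$ sweeps out an open subset of $\pi'^{\uparrow}$. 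The dimension bookkeeping here, splitting into the cases where $\dim(\pi+\pi')+2r-k-k'$ is $\le n$ or $>n$, is where I expect the real work.
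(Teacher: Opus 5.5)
The reduction is sound: it suffices to find, for $\rho$ in an open$_Z$ dense$_Z$ subset of $(\pi\cap\pi')^{\uparrow}$, a pair $(\eta,\eta')\in U\times U'$ with $\eta\cap\eta'\le\rho$. The gap is that the step which would supply such pairs is missing, and the mechanism you propose for it fails in part of the range.

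\textbf{The easy regime.} In Step 1 you observe that when $2r-\dim(\pi\cap\pi')\le n$ a generic pair has $\eta\cap\eta'=\pi\cap\pi'$. That alone settles this regime: such pairs form a nonempty open$_Z$ set, which meets $U\times U'$, so $S\supseteq(\pi\cap\pi')^{\uparrow}$ outright and no incidence variety is needed.

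\textbf{The remaining regime.} A component of $\Gamma$ dominating the first two factors comes for free. The part of $\Gamma$ over pairs with $\dim\eta\cap\eta'$ minimal is a Grassmann bundle over an irreducible base, and its closure dominates $\pi^{\uparrow}$ and $\pi'^{\uparrow}$. So producing $\Gamma_0$ is the same as proving $p_3$ dominant on it, which is the whole content of the proposition.

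\textbf{Step 3 and the fibrewise Step 2 claim fail.} Your route in Step 3 takes $(\rho,\eta)$ generic and then solves for $\eta'$. Your fibrewise claim in Step 2 is that $\Gamma_\rho$ dominates $\pi^{\uparrow}$ for generic $\rho$. Both are false when, after passing to $(\pi\cap\pi')^{\perp}$ so that $\pi\cap\pi'=0$, one has $k'+r>n$:
\begin{itemize}
\item every $\eta\in\pi^{\uparrow}$ meets $\pi'\le\eta'$ in dimension $\ge k'+r-n>0$;
\item a generic $\rho$ meets $\pi'$ in exactly that dimension;
\item so $\eta\cap\eta'\le\rho$ forces $\eta\cap\pi'=\rho\cap\pi'$, hence $\eta\ge\pi+(\rho\cap\pi')$, which is a proper closed condition on $\pi^{\uparrow}$.
\end{itemize}
Concretely, take $V=\bR^3$, $r=2$, $\pi$ a line and $\pi'$ a plane not containing it. Then $\pi'^{\uparrow}=\{\pi'\}$ and, for $\rho\ne\pi'$, $\Gamma_\rho$ is the single point $(\pi+(\rho\cap\pi'),\pi')$. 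The statement still holds here, but for a different reason: the lines $\eta\cap\pi'$, $\eta\in U$, fill a cofinite subset of $\bP\pi'$. In general you would have to confine $\eta,\eta'$ to the loci $(\pi+(\rho\cap\pi'))^{\uparrow}$ and $(\pi'+(\rho\cap\pi))^{\uparrow}$, and prove separately that $U$ and $U'$ meet them for generic $\rho$. Your plan contains neither step.

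\textbf{Chevalley fails over $\bR$.} In the real Zariski topology the paper fixes, images of real points are only semialgebraic. For instance $x\mapsto x^2$ maps $\bR$ onto $[0,\infty)$, which is Zariski dense but contains no nonempty Zariski-open set. So ``constructible dense image contains an open$_Z$ dense$_Z$ set'' is unavailable. You would need either a fibrewise argument over an explicitly open$_Z$ set of $\rho$, or a parametrisation by a product with the $\rho$-factor, where projections of Zariski-open sets are Zariski-open.

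\textbf{How the paper avoids both problems.} It never aims at a prescribed $\rho$:
\begin{itemize}
\item it makes $\pi\cap\pi'=0$ and reduces to lines $\pi,\pi'$, where $1+r\le n$ rules out the forced-intersection phenomenon above;
\item while $2r>n$, it fixes a line $\ell$ in a generic $\eta\cap\eta'$ and intersects everything with a hyperplane $H\ge\pi+\pi'$ complementary to $\ell$, lowering $2r-n$ by one;
\item once $2r\le n$, a single pair in $U\times U'$ already has $\eta\cap\eta'=0$, so $S$ contains the whole target spine.
\end{itemize}
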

\begin{proof}
  A number of harmless simplifying assumptions are available:
  \begin{itemize}[wide]
  \item One can always assume $\pi\cap \pi'=\{0\}$ when convenient, by working only with spaces containing that intersection and identifying all such with subspaces of $\left(\pi\cap \pi'\right)^{\perp}$.

  \item Substituting $\pi'\oplus\ell$ for lines
    \begin{equation*}
      \ell\in \text{orthogonal complement }\pi\ominus \left(\pi\cap \pi'\right)\text{ of $\pi\cap \pi'$ in $\pi$}
    \end{equation*}
    or similarly with the roles of $\pi$ and $\pi'$ interchanged, induction reduces the problem to (trivially intersecting) \emph{lines} $\pi$ and $\pi'$. 
  \end{itemize}
  Fix open$_Z$ dense$_Z$ $U\subseteq \pi^{\uparrow}$ and $U'\subseteq \pi'^{\uparrow}$ witnessing the hypothesis. Should it be the case that $2r>n$, $\eta\cap \eta'$ for $\eta\in U$, $\eta'\in U'$ will contain lines $\ell$ for lines ranging over an open$_Z$ dense$_Z$ subset of the projective space $\bP V$; selecting one such and substituting their respective intersections with a fixed hyperplane supplement $H\ge \pi+\pi'$ of $\ell$ for all spaces in sight, the problem reduces (by induction again) to $2r\le n$. In that case, though, $\eta\cap\eta'$ will be generically trivial and the conclusion is immediate. 
\end{proof}

\pf{th:spine.union}
\begin{th:spine.union}
  \Cref{le:d.distal}'s condition \Cref{item:le:d.distal:no.inj} justifies the claim, made in passing in the statement, that the noted union of spines will indeed be disjoint: certainly, two subspaces of $V$ cannot coincide if one does not inject through an orthogonal projection that fixes the other. 

  \Cref{pr:red.when.not.dstl} reduces the proof to arguing that whenever $S\subseteq \bG(r,V)$ is $\tau_d$-saturated and $\eta,\eta'\in S$ are \emph{not} $d$-distal, $S$ contains the entire spine $\left(\pi:=\eta\cap \eta'\right)^{\uparrow}$. Considering only $V$-subspaces containing $\pi$, substitute for every such the orthogonal complement $\bullet \ominus \pi$ therein. This (and induction on the total numerical size $r+d+n$) will effect the substitutions
  \begin{equation*}
    \bullet
    \xzigrightarrow{\ }
    \bullet\ominus \pi
    ,\quad
    \begin{aligned}
      r&\xzigrightarrow{\ }r-\dim \pi\\
      d&\xzigrightarrow{\ }d-\dim \pi\\
      n&\xzigrightarrow{\ }n-\dim \pi.
    \end{aligned}    
  \end{equation*}
  There is thus no loss in making the simplifying assumption that $\eta,\eta'$ intersect trivially and again, are not $\oslash_d$-related (for all dimension estimates in $\eta^{\perp}$ will be unaffected by the $\bullet\ominus \pi$ substitution). What is more, one may always suppose $2r>d$: the case not already settled by \cite[Theorem 1.8]{MR5050586}. The goal, now, is to argue that $S=\bG(r,V)$.
  
  \begin{enumerate}[(I),wide]
  \item\label{item:th:spine.union:pf.most.exhaust}\textbf{: We have $\zeta=\eta+P_{\zeta}\eta'$ for a dense$_Z$ open$_Z$ set of $\zeta\in \eta^{\uparrow}_{\bG(d,V)}$.} The condition amounts to $\zeta=\eta+\eta'+\zeta^{\perp}$, which will certainly be the case if $\eta+\eta'=V$. Were this not so, choose a line $\ell\le \left(\eta+\eta'\right)^{\perp}$ and proceed by induction, working in $V\ominus \ell$.

  \item\label{item:th:spine.union:pf.most.eta.spines}\textbf{: $S\supseteq \pi^{\uparrow}$ for a dense$_{Z}$ open$_Z$ set of $\pi\in \bG(2r-d,\eta)$.} Claim \Cref{item:th:spine.union:pf.most.exhaust} ensures that
    \begin{equation*}
      \{\eta\cap P_{\zeta}\eta'\ :\ \zeta\in \eta^{\uparrow}_{\bG(d,V)}\}
    \end{equation*}
    contains a dense$_Z$ open$_Z$ set of $\pi\in \bG(2r-d,\eta)$. For each such, apply again the substitution $\bullet\xzigrightarrow{}\bullet\ominus \pi$ to conclude that $S\supseteq \pi^{\uparrow}$ given that $\eta\ominus \pi$ and $P_{\zeta}\eta'\ominus \pi$ belong to $S\ominus \pi$: $r$ and $d$ will have been replaced by $d-r$ and $2d-2r$ respectively, and we are in the numerical regime devoid of distal pairs.

  \item\label{item:th:spine.union:pf.contains.dense.open}\textbf{: $S$ contains an open$_Z$ dense$_Z$ subset of $\bG(r,V)$.} The argument just employed applies to all $\eta''\in \bG(r,V)$, sufficiently Zariski-close to $\eta$, with
    \begin{equation*}
      \dim \eta\cap \eta''=2r-d
      ,\quad
      \left\{\eta\cap \eta''\right\}
      \subseteq
      \bG(2r-d,\eta) 
    \end{equation*}
    ranging over a dense$_Z$ open$_Z$ subset of the latter. Write $\eta\mid \eta''$ to denote the relation $\eta$ and $\eta'$ stand in with respect to each other. For sufficiently large $m$ all length-$m$ the $\zeta_m$ resulting from 
    \begin{equation*}
      \eta=:\eta_0\quad
      \mid\quad
      \eta_1\quad
      \mid \quad
      \cdots\quad
      \mid\quad \eta_m
    \end{equation*}
    range over the desired non-empty neighborhood$_Z$ of $\eta$.

  \item\textbf{: Conclusion.} All of the above applies to $\eta'$ fixed as initially and $\eta_0$ ranging over an open$_Z$ dense$_Z$ neighborhood $U\ni \eta\in \bG(r,V)$ of the $r$-plane chosen initially (for $U\subseteq S$, by \Cref{item:th:spine.union:pf.contains.dense.open}). Applying \Cref{item:th:spine.union:pf.most.exhaust} to the pairs $(\eta_0,\eta')$ (in place of $(\eta,\eta')$) will result in an open$_Z$ dense$_Z$ set $U:=\{\pi\}\subseteq \bG(2r-d,V)$ contained in the (now varying) $\eta_0$ with
    \begin{equation*}
      \forall \pi \left(\pi^{\uparrow}_{\bG(r,V)}\subseteq S\right).
    \end{equation*}
    This suffices to conclude: for all open$_Z$ dense$_Z$ $U'\subseteq \bG(2r-d,V)$ the union of all $\pi^{\uparrow}_{\bG(r,V)}$, $\pi\in U'$ is easily seen to exhaust $\bG(r,V)$.  \qedhere
  \end{enumerate}
\end{th:spine.union}


\addcontentsline{toc}{section}{References}

\def\polhk#1{\setbox0=\hbox{#1}{\ooalign{\hidewidth
  \lower1.5ex\hbox{`}\hidewidth\crcr\unhbox0}}}


\Addresses

\end{document}